\documentclass[preprint,12pt,3p]{elsarticle}

\usepackage{amssymb}
\usepackage{amsthm}
\usepackage{amsmath}
\usepackage{mathtools}
\usepackage{siunitx}

\usepackage[hidelinks,colorlinks=false]{hyperref}
\usepackage{color}
\usepackage{enumitem}
\usepackage[usenames,dvipsnames]{xcolor}
\usepackage{colortbl}
\usepackage{caption}
\usepackage{enumerate} 
\newtheorem{theorem}{Theorem}
\newtheorem{lemma}[theorem]{Lemma}

\newtheorem{corollary}[theorem]{Corollary}
\newtheorem{conjecture}[theorem]{Conjecture}

\newtheorem{question}[theorem]{Question}


\begin{document}

\begin{frontmatter}

\title{On the neighborhood complex of $\vec{s}$-stable Kneser graphs}

\author[label1]{Hamid Reza Daneshpajouh}
\address[label1]{School of Mathematics, Institute for Research in Fundamental Sciences (IPM),
Tehran, Iran, P.O. Box 19395-5746}

\ead{hr.daneshpajouh@ipm.ir}

\author[label5]{J\'{o}zsef Oszt\'{e}nyi}
\address[label5]{Department of Natural Sciences and Engineering, Faculty of Mechanical Engineering and Automation, John
von Neumann University, Izsáki út 10, Kecskemét, 6000, Hungary}
\ead{ osztenyi.jozsef@gamf.uni-neumann.hu}


\begin{abstract}
In 2002, A. Bj\"orner and M. de Longueville showed the neighborhood complex of the $2$-stable Kneser graph ${KG(n, k)}_{2-\textit{stab}}$ has the same homotopy type as the $(n-2k)$-sphere. A short time ago, an analogous result about the homotopy type of the neighborhood complex of almost $s$-stable Kneser graph has been announced by J. Oszt\'{e}nyi. Combining this result with the famous Lov\'{a}sz's topological lower bound on the chromatic number of graphs has been yielded a new way for determining the chromatic number of these graphs which was determined a bit earlier by P. Chen. 

In this paper we present a common generalization of the mentioned results. We will define the $\vec{s}$-stable Kneser graph ${KG(n, k)}_{\vec{s}-\textit{stab}}$ as the induced subgraph of the Kneser graph $KG(n, k)$ on $\vec{s}$-stable vertices. And we prove, for given an integer vector $\vec{s}=(s_1,\ldots, s_k)$ and $n\geq\sum_{i=1}^{k-1}s_i+2$ where $s_i\geq2$ for $i\neq k$ and $s_k\in\{1,2\}$, the neighborhood complex of ${KG(n, k)}_{\vec{s}-\textit{stab}}$ is homotopy equivalent to the $\left(n-\sum_{i=1}^{k-1}s_i-2\right)$-sphere. In particular, this implies that $\chi\left({KG(n, k)}_{\vec{s}-\textit{stab}}\right)= n-\sum_{i=1}^{k-1}s_i$ for the mentioned parameters. Moreover, as a simple corollary of the previous result, we will determine the chromatic number of 3-stable kneser graphs with at most one error.
\end{abstract}
\begin{keyword}
Chromatic number, Hom-complex, neighborhood complex, stable Kneser graphs
\end{keyword}

\end{frontmatter}

\section{Introduction}
Throughout this paper, the symbol $[n]$ stands for the set $\{1,\ldots, n\}$. A subset $A\subset [n]$ is called $s$-stable (almost $s$-stable) if $s\leq |i-j|\leq n-s$ ($s\leq |i-j|$) for each distinct $i, j\in A$. Kneser graph $KG(n, k)$ is a graph whose vertices are all $k$-subsets of the set $[n]$ and two of them are adjacent if their corresponding sets has empty intersection. The induced subgraph of $KG(n, k)$ on (almost) $s$-stable vertices is called (almost) $s$-stable Kneser graph and denoted by ${KG(n, k)}_{s-\textit{stab}}$ (${KG(n, k)}_{\widetilde{s-\textit{stab}}}$).

\subsection{History and Motivation}
In 1978, solving Kneser's conjecture by Lov\'{a}sz~\cite{Lovasz} has surprisingly opened a new door in mathematics: Using topological methods in combinatorics! Kneser, in 1955, raised a question in combinatorics which had remained unsolved for almost two decades. His conjecture, in the setting of graph coloring, was about the chromatic number of the Kneser graphs. Actually, Kneser gave a proper coloring of $KG(n, k)$ with $n-2(k-1)$ number of colors and conjectured that this number of colors is always necessary, that is $\chi (KG(n, k))= n-2(k-1)$ for $n\geq 2k$. Lov\'{a}sz's proof contained two main steps. First, to a given graph $G$, he associated a simplicial complex $\mathcal{N}(G)$, called neighborhood complex, whose simplices are subsets of the vertices of $G$ which have a common neighbor. Next, he showed that if the neighborhood complex $\mathcal{N}(G)$ of $G$ is (topologically) $k$-connected, then the chromatic number of $G$ is at least $k+3$. Finally, Lov\'{a}sz completed his proof by verifying that $\mathcal{N} (KG(n,k))$ is $(n-2k-1)$-connected. Shortly afterward, Schrijver~\cite{Schrijver} found a critical subgraph of $KG(n,k)$, called Schrijver's graph and denoted by $SG(n,k)$,  with the same chromatic number. It is worth pointing out that Schrijver used a different technique for computing the chromatic number of $SG(n,k)$. In particular, his proof did not say anything about the topology of the neighborhood complex of Schrijver's graphs. So, it was a natural question to ask whether one can determine the chromatic number of Schrijver's graphs via studying the neighborhood complex of them. In 2002, A. Bj\"{o}rner and M. de Longueville~\cite{lon} gave a positive answer to this question. Indeed, they showed the neighborhood complex of $SG(n,k)$ has the homotopy type of the $(n-2k)$-sphere. Later, the $s$-stable Kneser graph $KG(n, k)_{s-\textit{stab}}$ as a generalization of Schrijver's graph ($SG(n,k)=KG(n,k)_{2-\textit{stab}}$) was introduced by F. Meunier and he made the following conjecture.

\begin{conjecture}[\cite{Meunier1}]\label{conj}
Let $n,k, s$ be non-negative integers such that $n\geq sk$ and $s\geq 2$. Then
$$\chi\left({KG(n,k)}_{s-stab}\right)= n-s(k-1).$$
\end{conjecture}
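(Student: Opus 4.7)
The plan is to attack $\chi(KG(n,k)_{s\text{-}stab}) = n - s(k-1)$ by the standard two-sided strategy: exhibit an explicit proper coloring to get the upper bound, and use Lov\'asz's topological lower bound to match it. The upper bound is the easy direction. For any $s$-stable $k$-subset $A = \{a_1 < a_2 < \cdots < a_k\} \subset [n]$, the stability gap forces $a_i \geq a_1 + s(i-1)$, so $a_k \leq n$ gives $a_1 \leq n - s(k-1)$. Hence $c(A) := a_1$ takes values in $\{1, 2, \ldots, n - s(k-1)\}$, and it is a proper coloring because two disjoint subsets cannot share their minimum element. So $\chi \leq n - s(k-1)$.

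The heart of the proof is the matching lower bound $\chi \geq n - s(k-1)$. Following Lov\'asz's original paradigm, it suffices to show that the neighborhood complex $\mathcal{N}(KG(n,k)_{s\text{-}stab})$ is $\bigl(n - s(k-1) - 3\bigr)$-connected. In fact, generalizing the Bj\"orner--de Longueville theorem for $s=2$, I would aim for the stronger statement that $\mathcal{N}(KG(n,k)_{s\text{-}stab})$ is homotopy equivalent to the $\bigl(n - s(k-1) - 2\bigr)$-sphere. A simplex of this complex is a family of $s$-stable $k$-subsets sharing a common neighbor, equivalently a family of disjoint $s$-stable $k$-subsets contained in the complement of some fixed $s$-stable $k$-subset $W \subseteq [n]$. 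The combinatorics of such families — governed by how gaps of size $\geq s$ can be arranged cyclically on $[n]$ — is what must be unpacked.

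To extract the homotopy type, I would try a discrete Morse-theoretic matching on the face poset of $\mathcal{N}(KG(n,k)_{s\text{-}stab})$, designed so that the critical cells are a single vertex and a single top-dimensional face, leaving a sphere of the desired dimension. A natural pivot is the smallest index $i \in [n]$ which either belongs to every set of the family or can be \emph{added} to some set without violating $s$-stability or disjointness; one pairs faces differing in whether $i$ is used. An alternative is to build a shellable subcomplex or to exhibit an explicit deformation retraction onto a known triangulation of the sphere, which is the strategy that worked for Schrijver graphs.

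The hard part will be controlling the rigidity introduced by $s$-stability for $s \geq 3$. When $s = 2$ the stable $k$-subsets correspond to cyclic binary strings with no two consecutive $1$'s, and the resulting complex admits a transparent shelling; once $s$ grows, the forbidden patterns are longer, so adding or removing a single element from a family can destroy stability in nontrivial ways. The main obstruction will therefore be designing the matching (or shelling) so that the pivot operation preserves $s$-stability globally, likely forcing an induction on $k$ (or on the first gap of $W$) and a careful case analysis near the boundary of the allowed index range $[1, n-s(k-1)]$. This is the step where I expect the proof to genuinely differ from the $s=2$ argument.
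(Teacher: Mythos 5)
The statement you are attempting to prove is presented in the paper as an \emph{open conjecture} (Meunier's Conjecture~\ref{conj}), not as a theorem, and the paper does not contain a proof of it. The authors record that the conjecture is known for even $s$ (Chen) and for $s \geq 4$ with $n$ sufficiently large (Jonsson), but is ``completely open for $s=3$.'' The paper's own contribution is the weaker Theorem~\ref{chromaticnumber}, which handles $\vec s$-stable Kneser graphs only when the \emph{last} coordinate $s_k$ lies in $\{1,2\}$, and from this it derives only the one-off lower bound $\chi(KG(n,k)_{3\text{-}stab}) \geq n - 3(k-1) - 1$.

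Your upper bound (coloring by the minimum element) is fine. The lower-bound strategy, however, has a fatal gap: you propose to show that $\mathcal{N}(KG(n,k)_{s\text{-}stab})$ is homotopy equivalent to $\mathbb{S}^{\,n-s(k-1)-2}$, which would make it $(n - s(k-1) - 3)$-connected, i.e. $(n-sk)$-connected. The paper itself explicitly warns against this at the end of Section~4: Oszt\'enyi showed in \cite{osz} that $\mathcal{N}(KG(n,k)_{3\text{-}stab})$ is \emph{not} $(n-3k)$-connected in general. So for $s=3$ the neighborhood complex simply is not a sphere of the dimension you need, and Lov\'asz's bound applied to it cannot yield $\chi \geq n - 3(k-1)$. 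Your heuristic worry about ``rigidity introduced by $s$-stability for $s \geq 3$'' is not merely a technical nuisance to be engineered around with a cleverer discrete Morse matching or shelling: it reflects an actual change in the topology. Any successful attack on the conjecture for odd $s \geq 3$ must use a different invariant or a fundamentally different tool (as the known partial results do), not the connectivity of $\mathcal{N}(KG(n,k)_{s\text{-}stab})$.
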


This conjecture has been confirmed for all even $s$~\cite{chen2}, and for $s\geq 4$ when $n$ is sufficiently large~\cite{Jak}. But, the conjecture is completely open for $s=3$. A weaker form of this conjecture has also been known to be true. Actually, in 2017, P. Chen~\cite{chen} showed that the same formula is valid for almost s-stable Kneser graphs. Similar to Schrijver's proof, his proof was not based on Lov\'{a}sz's bound. But, very recently, J. Oszt\'{e}nyi has shown that Lov\'{a}sz's bound works for this problem as well~\cite{osz}. Indeed, he showed that the neighborhood complex of $KG(n, k)_{\widetilde{{s-stab}}}$ is homotopy equivalent to the $\mathbb{S}^{n-s(k-1)-2}$ for $n\geq sk$ and $s\geq 2$.
\subsection{Aims and objectives}
The main purpose of this paper is to present a common generalization of the A. Bj\"orner and M. de Longueville result and the J. Oszt\'{e}nyi result. In the first steep, we need to generalize the concept of stability as follows. For a $k$-set $A\subseteq [n]$, let $A(1),A(2),\ldots,A(k)$ be the ordered elements of $A$, that is $A(1)$ is the smallest element and $A(k)$ is the largest element of $A$ in the standard order. If $\vec{s}=(s_1,\ldots, s_k)$ is an integer vector, then a $k$-subset $A\subseteq [n]$ is called \textit{$\vec{s}$-stable}, if $s_j\leq A(j+1)-A(j)$ for $1\leq j \leq k-1$ and $A(k)-A(1)\leq n-s_k$. The \textit{$\vec{s}$-stable Kneser graph} $KG(n, k)_{\vec{s}-\textit{stab}}$ was obtained by restricting the vertex set of $KG(n,k)$ to the $\vec{s}$-stable $k$-subsets. Note that the cases $\vec{s}=(s,\ldots, s)$, and $\vec{s}=(s,\ldots, s,1)$ demonstrate the usual concept of s-stable and almost s-stable, respectively. Now, we are in a position to state our main result.
\begin{theorem}\label{main}
Let $n, k,$ be positive integers and $\vec{s}=(s_1, \ldots, s_k)$ be an integer vector where $k\geq 2$, $n\geq\sum_{i=1}^{k-1}s_i+2$, $s_i\geq2$ for $i\neq k$ and $s_k\in\{1,2\}$. Then, the neighborhood complex of $KG(n, k)_{\vec{s}-\textit{stab}}$ is homotopy equivalent to the $\left(n-\sum_{i=1}^{k-1}s_i-2\right)$-sphere. 
\end{theorem}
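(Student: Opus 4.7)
The strategy is to extend the arguments of Bj\"orner--de Longueville (for $\vec{s}=(2,\ldots,2)$) and Oszt\'enyi (for $\vec{s}=(s,\ldots,s,1)$), both of which establish the sphere homotopy type by producing an explicit sequence of elementary collapses (equivalently a discrete Morse matching) on $\mathcal{N}(G)$ that leaves exactly two critical cells, in dimensions $0$ and $n-\sum_{i=1}^{k-1}s_i-2$. The goal here is to construct such a matching that accommodates both the variable gap sizes $s_1,\ldots,s_{k-1}$ and the wrap-around parameter $s_k\in\{1,2\}$ simultaneously.

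As a first step I would replace $\mathcal{N}(G)$ by the homotopy-equivalent model $|\mathrm{Hom}(K_2,G)|$, whose cells, for $G=KG(n,k)_{\vec{s}-\textit{stab}}$, are pairs $(\mathcal{A},\mathcal{B})$ of nonempty families of $\vec{s}$-stable $k$-subsets of $[n]$ with $A\cap B=\emptyset$ for every $A\in\mathcal{A}$ and $B\in\mathcal{B}$. I would then induct on $n$, keeping $k$ and $\vec{s}$ fixed. In the base case $n=\sum_{i=1}^{k-1}s_i+2$ the expected homotopy type is $\mathbb{S}^0$: the $\vec{s}$-stable $k$-subsets have very little slack, and a direct enumeration should show that $\mathcal{N}$ has exactly two contractible components. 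For the inductive step I would prove a suspension relation
\[
\mathcal{N}(KG(n+1,k)_{\vec{s}-\textit{stab}}) \;\simeq\; \Sigma\,\mathcal{N}(KG(n,k)_{\vec{s}-\textit{stab}}),
\]
by decomposing the vertex set of $KG(n+1,k)_{\vec{s}-\textit{stab}}$ according to whether or not a subset contains the newly added element $n+1$ and arranging the two halves as cones sharing a common equator.

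The main obstacle, I expect, is the non-uniformity of the gaps. A single shift-by-one move, which drives the Morse matching in the uniform-$s$ proofs, may now violate $\vec{s}$-stability because the required gap depends on the position of the element being shifted; the matching must therefore keep track of which coordinate of an $\vec{s}$-stable $k$-set is moved and which gap constraints are active. Moreover, both sub-cases $s_k\in\{1,2\}$ must be handled: when $s_k=2$ the wrap-around constraint $A(k)-A(1)\le n-2$ excludes vertices present when $s_k=1$, yet both cases yield the same sphere. Showing that the inclusion of the $s_k=2$ graph into the $s_k=1$ graph induces a homotopy equivalence of neighborhood complexes --- equivalently, that the removed vertices (those containing both $1$ and $n$) contribute only a collapsible piece to $\mathrm{Hom}(K_2,\,\cdot\,)$ --- is likely the most delicate step of the argument and may require a separate folding or matching tailored to these wrap-around cells.
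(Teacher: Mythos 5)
Your plan tracks the paper's proof closely: prove the $s_k=1$ case by induction on $n$ via a suspension argument (base case $\mathbb{S}^0$ at $n=\sum_{i=1}^{k-1}s_i+2$, inductive step splitting according to whether the last element sits in $A$ or $B$), and then reduce the $s_k=2$ case to the $s_k=1$ case by an explicit discrete Morse collapsing --- exactly what the paper does in its Theorem~\ref{poset1} and Theorem~\ref{poset2}. The one streamlining you should add to make the details manageable is the paper's Lemma~\ref{neighborhoodcomplex}: rather than working on $\mathrm{Hom}_p(K_2,G)$ directly (pairs of \emph{families} of $\vec{s}$-stable $k$-sets), first contract via the union/selection closure operators to the much smaller poset $P(n,k,\vec{s})$ of pairs $(A,B)$ of disjoint subsets of $[n]$ each merely \emph{containing} an $\vec{s}$-stable $k$-set; it is this reduction that makes the contractibility of the two cone pieces in the suspension step and the acyclic matching for the $s_k=2$ comparison actually verifiable.
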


Next, in conjunction with Lov\'{a}sz's topological bound on the
chromatic number, we can determine the chromatic number of $\vec{s}$-stable Kneser graph for the mentioned parameters. In particular, this leads to a generalization of the P. Chen result.

\begin{theorem}\label{chromaticnumber}
Let $n, k,$ be positive integers and $\vec{s}=(s_1, \ldots, s_k)$ be an integer vector where $k\geq 2$, $n\geq\sum_{i=1}^{k-1}s_i+2$, $s_i\geq2$ for $i\neq k$ and $s_k\in\{1,2\}$. Then
	\[
	\chi\left({KG(n, k)}_{\vec{s}-\textit{stab}}\right)= n-\sum_{i=1}^{k-1}s_i.
	\] 
\end{theorem}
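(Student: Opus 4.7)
The plan is to prove Theorem~\ref{chromaticnumber} as the classical two-sided sandwich: an explicit proper coloring gives the upper bound, while Lov\'asz's topological lower bound applied to Theorem~\ref{main} supplies the matching lower bound. Throughout, write $N:=n-\sum_{i=1}^{k-1}s_i$ for the target value.

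For the upper bound, I would use the \emph{minimum-element coloring} $c(A):=\min A$ on the vertices of ${KG(n,k)}_{\vec{s}-\textit{stab}}$. The first task is to show that $c$ takes values in $\{1,\dots,N\}$: writing $A=\{a_1<a_2<\cdots<a_k\}$, the stability condition $a_{j+1}-a_j\ge s_j$ telescopes to $a_k\ge a_1+\sum_{j=1}^{k-1}s_j$, and since $a_k\le n$ this forces $a_1\le n-\sum_{j=1}^{k-1}s_j=N$. (Only the inner stability gaps enter here; the $s_k$ hypothesis is not needed for the upper bound.) The second task is to verify that $c$ is proper, which is immediate: if $A$ and $B$ are adjacent then $A\cap B=\emptyset$, so $\min A\neq\min B$. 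These two steps together give $\chi\bigl({KG(n,k)}_{\vec{s}-\textit{stab}}\bigr)\le N$.

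For the lower bound, I would invoke Lov\'asz's inequality $\chi(G)\ge\mathrm{conn}(\mathcal{N}(G))+3$. By Theorem~\ref{main}, the neighborhood complex $\mathcal{N}\bigl({KG(n,k)}_{\vec{s}-\textit{stab}}\bigr)$ is homotopy equivalent to $\mathbb{S}^{N-2}$ and is therefore $(N-3)$-connected, so Lov\'asz's bound yields $\chi\ge N$. Combining the two inequalities gives the claimed equality $\chi=N$.

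Because all of the topological heavy lifting is done in Theorem~\ref{main}, no serious obstacle remains for Theorem~\ref{chromaticnumber}: the proof reduces to a one-line telescoping estimate plus a direct application of the Lov\'asz bound. The only boundary case worth a remark is $N=2$ (equivalently $n=\sum_{i=1}^{k-1}s_i+2$), where the relevant sphere is $\mathbb{S}^0$ with connectivity $-1$; the Lov\'asz bound still delivers $\chi\ge 2$ there, in agreement with the minimum-element coloring.
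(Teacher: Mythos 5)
Your proof is correct and follows essentially the same route as the paper: an explicit min-based proper coloring for the upper bound, and Lov\'asz's bound applied to Theorem~\ref{main} for the lower bound. The only cosmetic difference is that the paper writes the coloring as $A\mapsto\min\{\min A,\, n-\sum_{i=1}^{k-1}s_i\}$; your telescoping observation shows the truncation is vacuous, so the two colorings coincide and your version is a minor simplification.
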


As a simple corollary of the above theorem we give a lower bound on the chromatic number of 3-stable Kneser graphs with at most one error. 

\subsection{Organization of the paper}
Section 2, presents preliminaries, tools, and a presentation of the problem that is more convenient in our setting. In Section 3, we prove the aforementioned result about the homotopy type of the neighborhood complex. Finally, in Section 4, we study the chromatic number of \textit{$\vec{s}$-stable Kneser graph}.

\section{Preliminaries, tools, and presentation}
The main purpose of this section is to present a simpler description of $\mathcal{N}\left(KG(n, k)_{\vec{s}-\textit{stab}}\right)$. 
Here and in what follows, we assume that the reader is familiar with the general theory of simplicial complex~\cite{Matousek} and homotopy theory~\cite{hatcher}. We just fix our notation and recall some basic facts that are crucial for our purpose.

Throughout this paper, the order complex of a poset $P$ is denoted by $\Delta (P)$. We say that the order-preserving maps $f: P\to Q$ between posets $P, Q$ are homotopic, denoted by $f\simeq g$, if the induced simplicial maps $f, g: \Delta (P)\to \Delta(Q)$ are homotopic. In addition, we say poset $P$ and $Q$ are homotopy equivalent, and denoted by $P\simeq Q$, if $\Delta(P)\simeq \Delta(Q)$. The following theorem, known as the Order Homotopy Theorem, is a very useful tool for showing homotopy equivalences among simplicial complexes.  
\begin{theorem}[\cite{Bjo}]
If $f, g : P\to Q$ are two order-preserving maps of finite posets with $f(x)\preceq g(x)$ for all $x\in P$, then $f\simeq g$. In particular, an order preserving map $\psi : P\to P$ with 
\begin{itemize}
    \item $\psi (x)\succeq x$ for all $x\in P$ (increasing operator) or
    \item $\psi (x)\preceq x$ for all $x\in P$ (decreasing operator),
\end{itemize}
induces a homotopy equivalence between $P$ and $Img(\psi)$.
\end{theorem}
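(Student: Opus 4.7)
The plan is to prove the first statement by building an explicit order-preserving interpolation out of the product poset $P \times C$, where $C = \{0 < 1\}$ denotes the two-element chain, and then exploiting the fact that $\Delta(P \times C)$ is a triangulation of the prism $|\Delta(P)| \times [0,1]$. First I will define a map $\phi : P \times C \to Q$ by $\phi(x,0) = f(x)$ and $\phi(x,1) = g(x)$. Checking monotonicity breaks into three cases: comparisons within level $0$ follow from monotonicity of $f$; within level $1$ from monotonicity of $g$; and a cross comparison $(x,0) \preceq (y,1)$ with $x \preceq y$ reduces to $f(x) \preceq f(y) \preceq g(y)$, where the second inequality is precisely the hypothesis $f \preceq g$ applied at $y$.

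Next I will invoke the standard observation that, for any finite poset $P$, the order complex $\Delta(P \times C)$ is the canonical prism triangulation of $|\Delta(P)| \times [0,1]$, with the two full subcomplexes $\Delta(P \times \{0\})$ and $\Delta(P \times \{1\})$ appearing as the two end copies of $\Delta(P)$. Consequently the two natural inclusions $\Delta(P) \hookrightarrow \Delta(P \times C)$ at levels $0$ and $1$ are homotopic. Post-composing both inclusions with the simplicial map induced by $\phi$ yields $f$ and $g$ respectively, and homotopies compose, so $f \simeq g$.

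For the ``in particular'' statement I will apply the first part twice. Taking $f = \mathrm{id}_P$ and $g = \psi$ in the increasing case gives $\psi \simeq \mathrm{id}_P$ at once. Factor $\psi = \iota \circ \psi'$, where $\iota : \mathrm{Img}(\psi) \hookrightarrow P$ is the inclusion and $\psi' : P \to \mathrm{Img}(\psi)$ is the corestriction. Then $\iota \circ \psi' = \psi \simeq \mathrm{id}_P$. For the other composition, note that $\psi' \circ \iota$ is order-preserving and satisfies $(\psi' \circ \iota)(y) = \psi(y) \succeq y$ for every $y \in \mathrm{Img}(\psi)$ (if $y = \psi(x)$, then $\psi(y) = \psi(\psi(x)) \succeq \psi(x) = y$), so a second application of the first part yields $\psi' \circ \iota \simeq \mathrm{id}_{\mathrm{Img}(\psi)}$. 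Hence $\iota$ and $\psi'$ are mutually inverse homotopy equivalences. The decreasing case is handled by the same argument with inequalities reversed, or by passing to the opposite poset.

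The only genuinely geometric step is the prism assertion in the second paragraph; everything else is a formal manipulation with order-preserving maps. The potential obstacle is wanting a fully self-contained justification of that step, but it is immediate upon unpacking the definition: a chain in $P \times C$ is determined by a chain $x_0 \prec \cdots \prec x_n$ in $P$ together with a threshold index at which the second coordinate jumps from $0$ to $1$, and this is exactly the combinatorial description of the standard prism subdivision of a simplex.
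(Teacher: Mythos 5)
Your proposal is correct. Note that the paper itself gives no proof of this statement: it is quoted as a citation to Bj\"orner's Handbook chapter (the Order Homotopy Theorem), so there is no ``paper's own proof'' to compare against.

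On the merits: your argument is the standard one. The map $\phi : P \times C \to Q$, $\phi(x,0)=f(x)$, $\phi(x,1)=g(x)$, is indeed order-preserving precisely because of the pointwise hypothesis $f(x)\preceq g(x)$ (your three-case check is complete, the only nontrivial case being the cross comparison $(x,0)\preceq(y,1)$, where $f(x)\preceq f(y)\preceq g(y)$). The identification of $\Delta(P\times C)$ with the prism triangulation of $|\Delta(P)|\times[0,1]$ is correct and you explain it correctly at the end: a chain in $P\times C$ is a chain in $P$ together with a threshold index at which the second coordinate switches from $0$ to $1$. The deduction of the ``in particular'' clause is also right, and it is worth emphasizing the small but essential observation you made: $\psi'\circ\iota$ is again an \emph{increasing} operator on $\mathrm{Img}(\psi)$, since for $y=\psi(x)$ one has $\psi(y)\succeq y$; without this the second application of the first part would not go through. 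The decreasing case via the opposite poset is a clean way to avoid repeating the argument. Altogether this is a complete and self-contained proof of the result the paper invokes by reference.
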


For given positive integers $n, k$, and a positive integer vector $\vec{s}=(s_1,\ldots, s_k)$, let $P(n,k,\vec{s})$ be the poset whose elements are all $2$-tuples $(A,B)$ of subsets of $[n]$ with $A\cap B=\emptyset$ and each of $A$ and $B$ contains $\vec{s}$-stable $k$-set. And, its partial order is defined by $(A_1, B_1) \leq (A_2, B_2)$ if $A_1 \subseteq A_2$ and $B_1\subseteq B_2$. Now, in this setting, a simpler description of $\mathcal{N}\left(KG(n,k)_{\vec{s}-\textit{stab}}\right)$ just means $\mathcal{N}\left(KG(n,k)_{\vec{s}-\textit{stab}}\right)\simeq\Delta(P(n,k,\vec{s}))$ which we will prove it in the rest of this section. To do this, we just need one more definition. 

The neighborhood complex is not the only complex that is used for bounding the chromatic number of graphs! As another example of such complexes, we can mention Hom-complexes which were introduced by Lov\'{a}sz as a generalization of neighborhood complex. In fact, the Hom-complex $Hom(H, G)$ is a complex associated to a pair of graphs $H, G$ and it carries topological information of all homomorphisms between $H$ and $G$. Indeed, one of the main motivation of defining this object is that $Hom(K_2, G)$ is homotopy equivalent to $\mathcal{N}(G)$ for every graph $G$~\cite{Kozlov1}. We just recall the definition of this special case of Hom-complexes, as it is needed for our purpose. The Hom-poset ${Hom}_{p}(K_2, G)$ is a poset whose elements are given by all $(\mathcal{A},\mathcal{B})$ of non-empty disjoint subsets of $V(G)$, such that for every $x\in \mathcal{A}$, $y\in \mathcal{B}$ we have $\{x, y\}\in E(G)$. And, its partial order is defined by $(\mathcal{A}_1,\mathcal{B}_1)\leq (\mathcal{A}_2, \mathcal{B}_2)$ if $\mathcal{A}_1 \subseteq \mathcal{A}_2$ and $\mathcal{B}_1\subseteq \mathcal{B}_2$. Finally, the order complex of $Hom_{p}(K_2, G)$ is called Hom-complex, and denoted by $Hom(K_2, G)$. 
\begin{lemma}\label{neighborhoodcomplex}
Let $n, k$ be positive integer and $\vec{s}=(s_1, \ldots, s_k)$ be an positive vectors. Then, 
$$\mathcal{N}\left(KG(n, k)_{\vec{s}-\textit{stab}}\right)\simeq \Delta\left(P(n, k,\vec{s})\right).$$  
\end{lemma}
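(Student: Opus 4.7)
The plan is to reduce the claim to a poset comparison between $P(n,k,\vec{s})$ and the Hom-poset $\mathrm{Hom}_p(K_2, G)$ for $G=KG(n,k)_{\vec{s}-\textit{stab}}$, and then use the Order Homotopy Theorem. Since the paper has already recalled that $\mathrm{Hom}(K_2, G) = \Delta(\mathrm{Hom}_p(K_2, G))$ is homotopy equivalent to $\mathcal{N}(G)$ for every graph $G$, it suffices to prove
\[
\Delta\bigl(\mathrm{Hom}_p(K_2, G)\bigr) \simeq \Delta\bigl(P(n,k,\vec{s})\bigr).
\]

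To do this, I would define two natural order-preserving maps going in opposite directions. Let $\phi: \mathrm{Hom}_p(K_2, G) \to P(n,k,\vec{s})$ send $(\mathcal{A},\mathcal{B})$ to $\bigl(\bigcup_{S\in\mathcal{A}} S,\ \bigcup_{T\in\mathcal{B}} T\bigr)$, and let $\psi: P(n,k,\vec{s}) \to \mathrm{Hom}_p(K_2, G)$ send $(A,B)$ to the pair $\bigl(\mathcal{S}_{\vec{s}}(A),\mathcal{S}_{\vec{s}}(B)\bigr)$, where $\mathcal{S}_{\vec{s}}(X)$ denotes the collection of all $\vec{s}$-stable $k$-subsets contained in $X$. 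The routine verifications are: (i) $\phi$ lands in $P(n,k,\vec{s})$ because the unions are disjoint and contain the $\vec{s}$-stable $k$-sets of $\mathcal{A},\mathcal{B}$; (ii) $\psi$ lands in $\mathrm{Hom}_p(K_2, G)$ because both $\mathcal{S}_{\vec{s}}(A),\mathcal{S}_{\vec{s}}(B)$ are non-empty by the definition of $P(n,k,\vec{s})$, disjoint since $A\cap B=\emptyset$, and every $S\subseteq A$, $T\subseteq B$ with $A\cap B=\emptyset$ satisfy $S\cap T=\emptyset$, so $\{S,T\}\in E(G)$; (iii) both maps respect the partial order by construction.

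Next I would compare the two compositions with the identity. On the one hand, $\phi\circ\psi(A,B)=\bigl(\bigcup\mathcal{S}_{\vec{s}}(A),\ \bigcup\mathcal{S}_{\vec{s}}(B)\bigr)\subseteq (A,B)$, so $\phi\circ\psi \preceq \mathrm{id}_{P(n,k,\vec{s})}$. On the other hand, for $(\mathcal{A},\mathcal{B})\in \mathrm{Hom}_p(K_2,G)$ every $S\in\mathcal{A}$ is an $\vec{s}$-stable $k$-subset of $\bigcup\mathcal{A}$, hence lies in $\mathcal{S}_{\vec{s}}(\bigcup\mathcal{A})$; similarly for $\mathcal{B}$. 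Therefore $\psi\circ\phi \succeq \mathrm{id}_{\mathrm{Hom}_p(K_2,G)}$. Applying the Order Homotopy Theorem to each composition yields $\phi\circ\psi\simeq\mathrm{id}$ and $\psi\circ\phi\simeq\mathrm{id}$, so $\phi$ and $\psi$ are mutually inverse homotopy equivalences between the two order complexes.

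There is essentially no hard step here; the whole argument is a soft categorical/combinatorial manipulation. The only point that requires a bit of care is confirming that $\psi$ is well defined, i.e.\ that $\mathcal{S}_{\vec{s}}(A)$ and $\mathcal{S}_{\vec{s}}(B)$ are non-empty (which is exactly the defining property of an element of $P(n,k,\vec{s})$) and that the pair does satisfy the disjointness-plus-edge condition defining $\mathrm{Hom}_p(K_2, G)$; once this bookkeeping is done, the Order Homotopy Theorem finishes the proof immediately.
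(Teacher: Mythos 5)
Your proof is correct and follows exactly the same route as the paper: pass to $\mathrm{Hom}_p(K_2,G)$, define the union map and the $\vec{s}$-stable-$k$-subsets map, check the two order inequalities against the identity, and invoke the Order Homotopy Theorem. The only cosmetic difference is notational ($\mathcal{S}_{\vec{s}}(X)$ versus $\binom{X}{k}_{\vec{s}\text{-stab}}$), and you include a slightly more explicit well-definedness check for $\psi$, which the paper handles by noting the empty case separately.
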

\begin{proof}
For simplicity of notation, for any $A\subseteq [n]$ let ${\binom{A}{k}}_{\vec{s}-\textit{stab}}$ denote the set of all $\vec{s}$-stable $k$-subsets of $A$. First, note that $\mathcal{N}\left(KG(n, k)_{\vec{s}-\textit{stab}}\right)=\emptyset$ if and only if $\Delta\left(P(n, k,\vec{s})\right)=\emptyset$. Thus, without loss of generality we can assume that $\mathcal{N}\left(KG(n, k)_{\vec{s}-\textit{stab}}\right)\neq\emptyset$. As $\mathcal{N}(G)$ and $Hom(K_2, G)$ have the same homotopy type for every graph $G$, it is enough to show that $\Delta\left(P(n, k,\vec{s})\right)$ and $Hom\left(K_2, KG(n, k)_{\vec{s}-\textit{stab}}\right)$ are homotopy equivalent.  Define the order preserving maps
 \begin{align*}
\varphi:Hom_p\left(K_2, KG(n, k)_{\vec{s}-\textit{stab}}\right) &\longrightarrow P(n, k, \vec{s})\\
  {\left(\mathcal{A},\mathcal{B}\right)} &\longmapsto  {\left(\bigcup\mathcal{A}, \bigcup\mathcal{B}\right)},
\end{align*}
and
 \begin{align*}
  \psi: P(n, k, \vec{s}) &\longrightarrow Hom_p\left(K_2, KG(n, k)_{\vec{s}-\textit{stab}}\right) \\
  {\left(A,B\right)}&\longmapsto {\left({\binom{A}{k}}_{\vec{s}-\textit{stab}},{\binom{B}{k}}_{\vec{s}-\textit{stab}}\right)}.
\end{align*}
Clearly, 
$$Id_{Hom_p\left(K_2, KG(n,k)_{\vec{s}-\textit{stab}}\right)}((\mathcal{A},\mathcal{B})) \preceq\psi\circ\varphi((\mathcal{A},\mathcal{B}))$$ 
for each $(\mathcal{A},\mathcal{B})\in Hom_p\left(K_2, KG(n, k)_{\vec{s}-\textit{stab}}\right)$ and 
$$\varphi\circ\psi((A,B))\preceq Id_{P(n,k, \vec{s})}((A,B))$$ 
for every $(A,B)\in P(n,k, \vec{s})$. Therefore, by Theorem 4, $\psi\circ\varphi\simeq Id_{Hom\left(K_2, KG(n, k)_{\vec{s}-\textit{stab}}\right)}$ and $\varphi\circ\psi\simeq Id_{P(n, k,\vec{s})}$. Hence, the posets $Hom_p\left(K_2, KG(n, k)_{\vec{s}-\textit{stab}}\right)$ and $P(n, k, \vec{s})$ have the same homotopy type. Now, the proof is completed.
\end{proof}

\section{The proof of the main theorem}

\subsection{The case $\vec{s}=(s_1,\ldots, s_{k-1}, 1)$}
\begin{sloppypar}
In this sub-section we will determine the homotopy type of $P(n, k, \vec{s})$ for ${\vec{s}=(s_1,\ldots, s_{k-1}, 1)}$
where $s_i\geq 2$. In particular, we strengthen the J. Oszt\'{e}nyi result,
the neighborhood complex of $KG(n, k)_{\widetilde{{s-stab}}}$ is homotopy equivalent to $\mathbb{S}^{n-s(k-1)-2}$. It is worth pointing out that Oszt\'{e}nyi's proof was based on discrete Morse theory, the Nerve homotopy theorem, and the following fact. 
\end{sloppypar}  
\begin{theorem}[\cite{lon}]\label{susp}
Let $\mathcal{K}$ be a simplicial complex and $\mathcal{C}$ and $\mathcal{D}$ be contractible subcomplexes such that $\mathcal{K}=\mathcal{C}\cup \mathcal{D}$. Then $\mathcal{K}$ is homotopy equivalent to the suspension of $\mathcal{C}\cap \mathcal{D}$.
\end{theorem}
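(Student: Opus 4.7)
The plan is to realize $\mathcal{K}$ as a pushout of $\mathcal{C}$ and $\mathcal{D}$ along $\mathcal{C}\cap\mathcal{D}$ and then collapse the two contractible ends to points, leaving a double cone, i.e.\ an unreduced suspension.

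First, since $\mathcal{C}$ and $\mathcal{D}$ are subcomplexes of $\mathcal{K}$ with $\mathcal{K}=\mathcal{C}\cup\mathcal{D}$, on geometric realizations we have the strict pushout
\[
\mathcal{K} \;=\; \mathcal{C}\,\cup_{\mathcal{C}\cap\mathcal{D}}\,\mathcal{D}.
\]
The inclusions $\mathcal{C}\cap\mathcal{D}\hookrightarrow\mathcal{C}$ and $\mathcal{C}\cap\mathcal{D}\hookrightarrow\mathcal{D}$ are inclusions of CW-subcomplexes and therefore cofibrations, so this strict pushout is already a homotopy pushout; equivalently, $\mathcal{K}$ is homotopy equivalent to the double mapping cylinder obtained by gluing $\mathcal{C}$ and $\mathcal{D}$ to the two ends of $(\mathcal{C}\cap\mathcal{D})\times[0,1]$.

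Next I would use contractibility. Since $\mathcal{C}\simeq\ast$ and $\mathcal{D}\simeq\ast$, the diagram $\mathcal{C}\leftarrow\mathcal{C}\cap\mathcal{D}\rightarrow\mathcal{D}$ is objectwise weakly equivalent to $\ast\leftarrow\mathcal{C}\cap\mathcal{D}\rightarrow\ast$. Homotopy invariance of the pushout for cofibrant diagrams then yields that $\mathcal{K}$ is homotopy equivalent to the pushout of the latter diagram, which is by definition the unreduced suspension $\Sigma(\mathcal{C}\cap\mathcal{D})$. Concretely: choose points $c_0\in\mathcal{C}$, $d_0\in\mathcal{D}$ and contractions of the two ends to these basepoints; the double mapping cylinder admits the resulting deformation independently on each side, collapsing it to the double cone on $\mathcal{C}\cap\mathcal{D}$ with apexes $c_0$ and $d_0$.

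The only delicate step is the passage from the strict to the homotopy pushout, for which cofibrancy of the two intersection inclusions is essential; in the simplicial setting it is automatic. Should one prefer to avoid homotopy pushout language, the same argument can be run via cofiber sequences: $\mathcal{K}\simeq\mathcal{K}/\mathcal{C}$ because $\mathcal{C}$ is contractible and $\mathcal{C}\hookrightarrow\mathcal{K}$ is a cofibration, the quotient $\mathcal{K}/\mathcal{C}$ is identified with $\mathcal{D}/(\mathcal{C}\cap\mathcal{D})$ by excision, and finally $\mathcal{D}/(\mathcal{C}\cap\mathcal{D})\simeq\Sigma(\mathcal{C}\cap\mathcal{D})$ since $\mathcal{D}$ is contractible with $\mathcal{C}\cap\mathcal{D}\hookrightarrow\mathcal{D}$ a cofibration.
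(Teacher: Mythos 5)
The paper states this theorem without proof, citing Bj\"{o}rner and de Longueville; there is no in-text argument to compare against. Your proposal is the standard one for this classical gluing lemma and is essentially correct. Both of your routes work: the inclusions $\mathcal{C}\cap\mathcal{D}\hookrightarrow\mathcal{C}$ and $\mathcal{C}\cap\mathcal{D}\hookrightarrow\mathcal{D}$ are CW cofibrations, so $\mathcal{K}$ is the homotopy pushout of $\mathcal{C}\leftarrow\mathcal{C}\cap\mathcal{D}\rightarrow\mathcal{D}$, and the gluing lemma lets you replace the two contractible ends by points; alternatively the cofiber-sequence computation $\mathcal{K}\simeq\mathcal{K}/\mathcal{C}\cong\mathcal{D}/(\mathcal{C}\cap\mathcal{D})\simeq\Sigma(\mathcal{C}\cap\mathcal{D})$ goes through because $\mathcal{C}$, $\mathcal{D}$ are contractible and all inclusions are cofibrations.

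Two small points of precision you should tighten. First, the \emph{strict} colimit of $\ast\leftarrow\mathcal{C}\cap\mathcal{D}\rightarrow\ast$ is a single point; it is the \emph{homotopy} pushout (double mapping cylinder $\mathcal{C}\cap\mathcal{D}\times[0,1]$ with its two ends crushed) that gives the unreduced suspension, as your concrete double-cone description correctly records --- the sentence ``which is by definition the unreduced suspension'' should refer to that homotopy pushout, not to the categorical one. Second, the suspension appearing here and in the Puppe sequence identification $\mathcal{D}/(\mathcal{C}\cap\mathcal{D})\simeq\Sigma(\mathcal{C}\cap\mathcal{D})$ is the unreduced one, which is what the theorem means (in the simplicial setting it is the join with $S^0$); the reduced and unreduced suspensions agree up to homotopy for CW complexes, so nothing is lost, but it is worth saying. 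With those clarifications the argument is complete, including the case $\mathcal{C}\cap\mathcal{D}=\emptyset$, where the unreduced suspension is $S^0$ and $\mathcal{K}$ is the disjoint union of two contractible pieces.
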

Here, we use slightly different technique that is mainly based on Order Homotopy Theorem and Theorem \ref{susp}. For simplicity of notation, the symbol $[k,n]$ is used instead of the set $\{k, \ldots, n\}$ where $k\leq n$. 

\begin{theorem}\label{poset1}
For given positive integer $k\geq 2$, an integer vector $\vec{s}=(s_1,\ldots, s_{k-1},1)$ where $s_i\geq 2$, and $n \geq \sum_{i=1}^{k-1} s_i + 2$ the poset
$P(n,k,\vec{s})$ is homotopy equivalent to 
$\mathbb{S}^{n-\sum_{i=1}^{k-1} s_i - 2}$. In particular, $P(n,k,\vec{s})$ is $\left(n-\sum_{i=1}^{k-1} s_i-3\right)$-connected.
\end{theorem}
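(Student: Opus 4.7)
The plan is to induct on $n$, with base case $n = \sum_{i=1}^{k-1} s_i + 2$ and an inductive step establishing $P(n, k, \vec{s}) \simeq \Sigma P(n - 1, k, \vec{s})$ via Theorem \ref{susp}. For the base case, a direct counting argument using the standard gap-shrinking bijection shows that $[n]$ has exactly $k + 1$ $\vec{s}$-stable $k$-subsets; since every such set $A$ satisfies $\max A - \min A \geq \sum_{i=1}^{k-1} s_i$, a quick endpoint analysis forces a unique unordered disjoint pair $\{A_0, B_0\}$, so $P(n, k, \vec{s})$ has precisely two connected components (the up-sets of $(A_0, B_0)$ and $(B_0, A_0)$), each with a unique minimum and hence contractible via the Order Homotopy Theorem applied to the constant map into that minimum. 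Therefore $\Delta(P(n, k, \vec{s})) \simeq \mathbb{S}^{0}$.

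For the inductive step, I would apply Theorem \ref{susp} to the cover $\Delta(P(n, k, \vec{s})) = \mathcal{C} \cup \mathcal{D}$ defined by
\[
\mathcal{C} = \Delta(\{(A, B) \in P : n \notin B\}), \qquad \mathcal{D} = \Delta(\{(A, B) \in P : n \notin A\}).
\]
Since $A \cap B = \emptyset$ forbids $n$ from lying in both coordinates, and since the partial order preserves the ``location'' of $n$ along any chain, this is a genuine cover. Their intersection is $\Delta(\{(A, B) \in P : n \notin A \cup B\})$, canonically identified with $\Delta(P(n - 1, k, \vec{s}))$, which by the inductive hypothesis is homotopy equivalent to $\mathbb{S}^{n - \sum_{i=1}^{k-1} s_i - 3}$. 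Once $\mathcal{C}$ and $\mathcal{D}$ are shown contractible, Theorem \ref{susp} delivers $\Sigma \mathbb{S}^{n - \sum_{i=1}^{k-1} s_i - 3} = \mathbb{S}^{n - \sum_{i=1}^{k-1} s_i - 2}$, as required.

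The contractibility of $\mathcal{C}$ (and of $\mathcal{D}$ by symmetry) begins with the Order Homotopy Theorem applied to $\phi(A, B) = (A \cup \{n\}, B)$: this is a well-defined order-preserving increasing operator on $\{(A, B) \in P : n \notin B\}$, since $n \notin B$ keeps disjointness intact and enlarging $A$ cannot destroy an existing $\vec{s}$-stable $k$-subset. Hence $\mathcal{C}$ deformation retracts onto $\Delta(U)$ with $U = \{(A, B) \in P : n \in A\}$. The main obstacle, and the technical heart of the argument, is then showing that $\Delta(U)$ is contractible: the obvious candidate $(A, B) \mapsto ([n] \setminus B, B)$ fails to be order-preserving (larger $B$ produces a smaller first coordinate). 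My plan is to apply the Order Homotopy Theorem a second time with a closure operator that, exploiting the fact that $s_k = 1$ imposes no cyclic constraint at $n$, forces $A$ to contain a canonical $\vec{s}$-stable $k$-set $A_0 \ni n$ (for instance $A_0 = \{n - \sum_{i=j}^{k-1} s_i : j = 1, \ldots, k\}$), and then to conclude via a cone-point argument on the resulting subposet. The pairs $(A, B) \in U$ with $A_0 \cap B \ne \emptyset$ fall outside the naive domain of this closure and require an auxiliary sub-decomposition of $U$; this case is where I expect the proof to be the most delicate.
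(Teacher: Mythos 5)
Your base case and your choice of decomposition for the inductive step (cover by $\{n\notin B\}$ and $\{n\notin A\}$, intersection $P(n-1,k,\vec s)$, then apply the suspension lemma) coincide exactly with the paper's. The counting of $\vec s$-stable $k$-subsets at $n=\sum_{i=1}^{k-1}s_i+2$ is correct --- there are $k+1$ of them, with a unique disjoint pair, which is also what the paper exploits via the explicit sets $U$ and $V$. The first reduction of $\mathcal C$ via $(A,B)\mapsto(A\cup\{n\},B)$ is likewise the same as the paper's first operator.

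The real gap is exactly where you say it is, and you leave it unresolved: showing that the subposet $\{(A,B):n\in A,\ n\notin B\}$ is contractible. Your proposed closure operator, which forces $A$ to absorb the whole canonical set $A_0=\{n,\ n-s_{k-1},\ \dots,\ n-\sum_{j=1}^{k-1}s_{k-j}\}$ in one shot, is not well-defined on the pairs where $B$ meets $A_0$, and ``an auxiliary sub-decomposition'' is not a proof. The paper's way around this is an \emph{interleaving}: build up $A$ toward $A_0$ and $B$ toward the shifted set $B_0=A_0-1$ in lockstep, one element at a time, passing through intermediate subposets $P^{(0)}\supseteq\cdots\supseteq P^{(k)}$. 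At step $i$ one first inserts $n-\sum_{j=1}^i s_{k-j}$ into $A$ (increasing operator), then erases the adjacent garbage $A\cap[n-\sum_{j=1}^{i+1}s_{k-j}+1,\ n-\sum_{j=1}^i s_{k-j}-1]$ (decreasing operator, safe because a stable set can use at most one point from that window), and only then repeats the two moves for $B$ with everything shifted down by one. The point of the interleaving is that when you are about to add $n-\sum_{j=1}^i s_{k-j}$ to $A$, membership in $P^{(i)}$ already constrains $B$ on $[n-\sum_{j=1}^i s_{k-j},\ n]$ so that $n-\sum_{j=1}^i s_{k-j}\notin B$; the inequality $\sum_{j=m+1}^i s_{k-j}\geq 2>1$ (here $s_j\geq 2$ for $j<k$ is used crucially) is exactly what rules out the collision. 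A one-shot closure onto $A_0$ has no analogous control over $B$, which is why your ``delicate case'' is genuinely the heart of the problem and not a side issue; without the lockstep construction the argument does not close.
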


\begin{proof}
The proof is by induction on $n$. The base case is $n=\sum_{i=1}^{k-1} s_i+2$. Put 
$$U= \{1, s_1+1, s_2+s_1+1, \ldots, \sum_{i=1}^{k-1} s_i+1\}\quad\&\quad V=\{2, s_1+2, s_2+s_1+2, \ldots, \sum_{i=1}^{k-1} s_i+2\},$$
and consider the following subposets of $P(n,k,\vec{s})$:
$$P_1=\{(A, B)\in P(n,k,\vec{s}): V\subseteq A\quad \&\quad  U\subseteq B\}$$
$$P_2=\{(A, B)\in P(n,k,\vec{s}): U\subseteq A \quad \&\quad V\subseteq B\}.$$
It is easy to see that $P(n,k,\vec{s})$ is a disjoint union of $P_1$ and $P_2$.
Moreover, $(V, U)$ and $(U, V)$ are minimal elements of $P_1$ and $P_2$ respectively. Hence, both of them are contractible. Thus, $\Delta\left(P(n,k,\vec{s})\right)\simeq\mathbb{S}^0$.

Now, assume that $n> \sum_{i=1}^{k-1} s_i+2$. Let $P_1$ and $P_2$ be the subposets of $P(n,k,\vec{s})$ defined by
$$P_1=\{(A, B)\in P(n,k,\vec{s}): n\notin B\}$$
$$P_2=\{(A, B)\in P(n,k,\vec{s}): n\notin A\}.$$
Clearly $\Delta\left(P(n,k, \vec{s})\right)= \Delta(P_1)\cup\Delta(P_2)$, and $\Delta(P_1)\cap\Delta(P_2)= \Delta\left(P(n-1,k, \vec{s})\right)$. Thus, by the induction hypothesis $\Delta(P_1)\cap\Delta(P_2)$ is homotopy equivalent to $\mathbb{S}^{n-\sum_{i=1}^{k-1} s_i - 3}$. Now, Theorem \ref{susp} tells us for completing the proof, it suffices to show that $P_1$ and $P_2$ are contractible.
Since $P_1$ is isomorphic with $P_2$, it is enough to consider only the case $P_1$. In order to prove that $P_1$ is contractible, consider the following decreasing chain of subposets of $P_1$:
$$P_1= P^{(0)}\supseteq P^{(1)}\supseteq\cdots\supseteq P^{(k)},$$
where
\begin{align*}
P^{(i)}=\{ & (A, B)\in P_1 : A\cap [n-\left(\sum_{j=1}^{i}s_{k-j}\right)+1, n]=\{n, n-s_{k-1}, \ldots, n-\left(\sum_{j=1}^{i-1}s_{k-j}\right)\}\&\\
& B\cap [n-\left(\sum_{j=1}^{i}s_{k-j}\right), n]=\{n-1, n-s_{k-1}-1, \ldots, n-\left(\sum_{j=1}^{i-1}s_{k-j}\right)-1\}\}.  
\end{align*}
for $1\leq i\leq k$.

We will show that $P_1$ and $P^{(k)}$ have the same homotopy type. This finishes the proof as $P^{(k)}$ is contractible. Indeed, 
$$(\{n, n-s_{k-1}, \ldots, n-\left(\sum_{j=1}^{k-1}s_{k-j}\right)\}, \{n-1, n-s_{k-1}-1, \ldots, n-\left(\sum_{j=1}^{k-1}s_{k-j}\right)-1\})$$
is the minimal element of $P^{(k)}$ which implies $P^{(k)}$ be contractible, and hence $P_1$ is contractible. To confirm our claim, we show that $P^{(i)}\simeq P^{(i+1)}$ for every $0\leq i < k$.  

Define the order-preserving map
 \begin{align*}
\varphi_{1}:P^{(i)} \longrightarrow  P^{(i)} & \\
  \left(A, B\right) &\longmapsto \left(A\cup\{n-\left(\sum_{j=1}^{i}s_{k-j}\right)\}, B\right).
\end{align*}
Note that this function is well-defined as $n-\left(\sum_{j=1}^{i}s_{k-j}\right)\notin B$. Moreover, it is easy to check that $\varphi_1$ is an increasing operator. Thus, by Theorem 4, 
$$P^{(i)}\simeq Img(\varphi_1)=\{(A, B)\in P^{(i)}: n-\left(\sum_{j=1}^{i}s_{k-j}\right)\in A\}.$$
Now, consider the following decreasing operator
 \begin{align*}
\varphi_{2}:Img(\varphi_1) \longrightarrow& Img(\varphi_1)\\
  \left(A, B\right) &\longmapsto \left(A\setminus[n-\left(\sum_{j=1}^{i+1}s_{k-j}\right)+1, n-\left(\sum_{j=1}^{i}s_{k-j}\right)-1], B\right).
\end{align*}
Note that removing the set $[n-\left(\sum_{j=1}^{i+1}s_{k-j}\right)+1, n-\left(\sum_{j=1}^{i}s_{k-j}\right)-1]$ from $A$ does not affect the elements of the maximal $\vec{s}$-stable subset of $A$. Indeed, if 
$$T\subset A\subset [n-\left(\sum_{j=1}^{i}s_{k-j}\right)]\bigcup\{n-\left(\sum_{j=1}^{i-1}s_{k-j}\right), n-\left(\sum_{j=1}^{i-2}s_{k-j}\right), \ldots, n\}$$ is an $\vec{s}$-stable subset of $A$, then this subset has at most one element from 
$$[n-\left(\sum_{j=1}^{i+1}s_{k-j}\right)+1, n-\left(\sum_{j=1}^{i}s_{k-j}\right)].$$
Therefore, 
$$\left(T\setminus [n-\left(\sum_{j=1}^{i+1}s_{k-j}\right)+1, n-\left(\sum_{j=1}^{i}s_{k-j}\right)-1]\right)\bigcup\{n-\left(\sum_{j=1}^{i}s_{k-j}\right)\},$$
is also an $\vec{s}$-stable set. So, the map $\varphi_2$ is well defined. In conclusion, we have 
\begin{align*}
Img(\varphi_1) & \simeq Img(\varphi_2)=\\
& \{(A, B)\in P^{(i)} : A\cap [n-\left(\sum_{j=1}^{i+1}s_{k-j}\right)+1, n-\left(\sum_{j=1}^{i}s_{k-j}\right)]=\{n-\left(\sum_{j=1}^{i}s_{k-j}\right)\}\}.
\end{align*}
Now, the following increasing operator
 \begin{align*}
\varphi_{3}:Img(\varphi_2) \longrightarrow& Img(\varphi_2)\\
  \left(A, B\right) \longmapsto & \left(A, B\cup\{n-\left(\sum_{j=1}^{i}s_{k-j}\right)-1\}\right).
\end{align*}
implies
$$Img(\varphi_2)\simeq Img(\varphi_3)=\{(A, B)\in Img(\varphi_2) :  n-\left(\sum_{j=1}^{i}s_{k-j}\right)-1\in B\}.$$
Finally, consider the following decreasing operator 
 \begin{align*}
\varphi_{4}:Img(\varphi_3) \longrightarrow& Img(\varphi_3)\\
  \left(A, B\right) \longmapsto & \left(A, B\setminus[n-\left(\sum_{j=1}^{i+1}s_{k-j}\right), n-\left(\sum_{j=1}^{i}s_{k-j}\right)-2]\right).
\end{align*}
With the same reason as discussed for the map $\varphi_2$, the map $\psi_1$ is well-defined. Thus,
\begin{align*}
Img(\varphi_3)& \simeq Img(\varphi_4)= \{ (A, B)\in P^{(i)} :\\
& A\cap [n-\left(\sum_{j=1}^{i+1}s_{k-j}\right)+1, n-\left(\sum_{j=1}^{i}s_{k-j}\right)]=\{n-\left(\sum_{j=1}^{i}s_{k-j}\right)\}, \\
& B\cap [n-\left(\sum_{j=1}^{i+1}s_{k-j}\right),n-\left(\sum_{j=1}^{i}s_{k-j}\right)-1]=\{n-\left(\sum_{j=1}^{i}s_{k-j}\right)-1\}\}=P^{(i+1)}.  
\end{align*}
Now, the proof is completed.
\end{proof}

\subsection{The case $\vec{s}=(s_1,\ldots, s_{k-1}, 2)$}
This sub-section is devoted to determining the homotopy type of $P(n, k, \vec{s})$ for $\vec{s}=(s_1,\ldots, s_{k-1}, 2)$ where $s_i\geq 2$.
Our main tool for this purpose is Discrete Morse Theory. 

The Discrete Morse Theory, introduced by Forman \cite{F98}, is a convenient tool in combinatorial topology for proving homotopy equivalences. It works on the face poset $P=P(\mathcal{K})$ of a simplicial complex $\mathcal{K}$. A {\it partial matching} on $P$ is a set $\Sigma\subseteq P$, and an injective map $\mu:\Sigma\rightarrow P\setminus\Sigma$, such that $\mu(\sigma)\succ \sigma$ ($\succ$ is the lower cover relation on $P$), for all $\sigma\in\Sigma$. The elements of  $P\setminus(\Sigma\cup\mu(\Sigma))$ are called {\it critical}. Additionally, such a partial matching $\mu$ is called {\it acyclic} if there does not exist a sequence of distinct elements $\sigma_1,\dots,\sigma_t\in\Sigma$, where $t\geq 2$, satisfying $\mu(\sigma_1)\succ \sigma_2$, $\mu(\sigma_2)\succ \sigma_3$,$\dots$, $\mu(\sigma_k)\succ \sigma_1$. We will use the main theorem of DMT:

\begin{theorem}[\,{\cite[Theorem 11.13]{K08}}\,]\label{DMT}
	Let $\mathcal{K}$ be a simplicial complex, and let $\mu$ be an acyclic matching on the face poset of $\mathcal{K}$. If the critical simplices form a sub-complex $\mathcal{K}_c$ of $\mathcal{K}$, then $\mathcal{K}_c$ and $\mathcal{K}$ are homotopy equivalent.
\end{theorem}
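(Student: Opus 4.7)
The plan is to prove Theorem~\ref{DMT} by converting the acyclic matching $\mu$ into a sequence of elementary collapses that take $\mathcal{K}$ down to $\mathcal{K}_c$. Recall that an elementary collapse removes a pair $(\tau,\sigma)$ where $\sigma$ is a maximal simplex of the current complex and $\tau$ is a codimension-one face of $\sigma$ contained in no other simplex (a \emph{free face}); such a move is a strong deformation retract and hence preserves homotopy type. Since every matched pair $(\sigma,\mu(\sigma))$ satisfies $\mu(\sigma)\succ\sigma$ in the cover relation, each matched pair is a candidate playing the role $(\tau,\sigma)$ of a potential elementary collapse.

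The first step is to repackage the acyclicity hypothesis. Form the directed graph $D_\mu$ on the face poset $P(\mathcal{K})$ by orienting every covering relation $\tau\prec\sigma$ downward, except for the matched covers $\sigma\prec\mu(\sigma)$, which are reversed. By definition of acyclic matching, $D_\mu$ has no directed cycle, so it admits a linear extension $\le^*$. I would then schedule the collapses in the order in which the upper elements $\mu(\sigma)$ appear under $\le^*$, processing matched pairs from ``top'' downward.

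The core verification is that at the moment a pair $(\sigma,\mu(\sigma))$ is processed, $\mu(\sigma)$ is maximal in the current complex and $\sigma$ is a free face of it. Suppose for contradiction that a still-present simplex $\sigma'\succ\sigma$ distinct from $\mu(\sigma)$ obstructs the collapse. Then $\sigma'$ is either critical or matched with an unprocessed partner. The downward edge $\sigma'\to\sigma$ in $D_\mu$ together with the reversed matched edge $\sigma\to\mu(\sigma)$ forces $\mu(\sigma)\le^*\sigma'$ under the linear extension, which by the scheduling means $\sigma'$ has already been removed, contradicting its presence. The critical alternative is ruled out by the subcomplex hypothesis on $\mathcal{K}_c$: if $\sigma'$ were critical with $\sigma$ as a face, then $\sigma$ itself would lie in the subcomplex $\mathcal{K}_c$, hence would be critical, contradicting that $\sigma$ is matched.

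After every matched pair has been removed, the surviving simplices form precisely $\mathcal{K}_c$. Since each step was a strong deformation retract, $\mathcal{K}$ deformation retracts onto $\mathcal{K}_c$, giving $\mathcal{K}\simeq\mathcal{K}_c$. The main obstacle is exactly this last verification: without the subcomplex hypothesis, Forman's general theorem yields only a homotopy equivalence with an abstract CW complex whose cells correspond to the critical simplices, not with a subcomplex of $\mathcal{K}$ on the nose. The extra assumption in Theorem~\ref{DMT} is what lets the abstract cell-attachment construction be replaced by a transparent sequence of geometric collapses, and balancing that hypothesis against the scheduling derived from acyclicity is the delicate part of the proof.
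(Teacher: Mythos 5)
This theorem is quoted verbatim from Kozlov's book and is not proved in the paper, so there is no internal proof to compare against; your proposal has to be judged on its own. The overall plan — orient the modified Hasse diagram $D_\mu$, take a linear extension $\le^*$, and realize the acyclic matching as a sequence of elementary collapses scheduled by $\le^*$ — is the right one, and your use of the subcomplex hypothesis to rule out a critical blocking coface is correct. But the schedule you chose does not close the argument.

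You process pairs in decreasing $\le^*$-order of the \emph{upper} elements $\mu(\sigma)$, and when a coface $\sigma'\succ\sigma$ with $\sigma'\neq\mu(\sigma)$ is present you derive $\mu(\sigma)<^*\sigma'$ and conclude that $\sigma'$ ``has already been removed.'' That only follows when $\sigma'$ is itself the \emph{upper} element of its pair. If instead $\sigma'$ is the \emph{lower} element, matched to some $\mu(\sigma')\succ\sigma'$, then $\sigma'$ is removed only when $\mu(\sigma')$ is reached in your schedule, and the inequalities you actually have — $\mu(\sigma)<^*\sigma<^*\sigma'$ together with $\mu(\sigma')<^*\sigma'$ — impose no constraint on where $\mu(\sigma')$ sits relative to $\mu(\sigma)$; it is entirely consistent for $\mu(\sigma')<^*\mu(\sigma)$, in which case $\sigma'$ is still present and blocks the collapse. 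There is also a smaller unstated step: a blocking coface $\sigma'\supsetneq\sigma$ need not cover $\sigma$, so you must first replace it by a cover of $\sigma$ contained in $\sigma'$ and distinct from $\mu(\sigma)$ (this is always possible when $\sigma'\neq\mu(\sigma)$, but it should be said so that the edge $\sigma'\to\sigma$ actually exists in $D_\mu$). The fix for the main gap is to schedule by the \emph{lower} elements: process $(\sigma,\mu(\sigma))$ in decreasing $\le^*$-order of $\sigma$ (equivalently, of $\max_{\le^*}\{\sigma,\mu(\sigma)\}$, since the reversed matched edge forces $\mu(\sigma)<^*\sigma$). Then for a blocking cover $\sigma'$, the edge $\sigma'\to\sigma$ gives $\sigma<^*\sigma'$, which already contradicts the schedule if $\sigma'$ is a lower matched cell, and if $\sigma'=\mu(\sigma'')$ is an upper matched cell the reversed edge $\sigma''\to\sigma'$ gives $\sigma<^*\sigma'<^*\sigma''$, again a contradiction. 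With that reordering your argument goes through.
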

 
 Bj\"{o}rner and de Longueville \cite{lon}, when studied the neighborhood complex of $KG(n,k)_{2-stab}$, verified that a subcomplex of the neighborhood complex of the almost 2-stable Kneser graph is homotopy equivalent to the neighborhood complex of the 2-stable Kneser graph. Now we prove a generalization of this result. 
  
 \begin{theorem}\label{poset2}
 	For given positive integer $k\geq 2$, and integer vectors $\vec{s}=(s_1,\ldots, s_{k-1},1)$ and $\vec{s}_\ast=(s_1,\ldots, s_{k-1},2)$ where $s_i\geq 2$, and $n \geq \sum_{i=1}^{k-1} s_i + 2$ the complex $\Delta(P(n,k,\vec{s}))$ is homotopy equivalent to $\Delta(P(n,k,\vec{s}_\ast))$.
 \end{theorem}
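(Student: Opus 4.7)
The strategy is to exhibit $\Delta(P(n,k,\vec{s}_\ast))$ as a deformation retract of $\Delta(P(n,k,\vec{s}))$ by means of Discrete Morse Theory (Theorem \ref{DMT}). The starting observation is that $\vec{s}_\ast$-stability strengthens $\vec{s}$-stability: the two definitions agree on the first $k-1$ gap conditions, and differ only in the last one, $A(k)-A(1)\leq n-2$ (for $\vec{s}_\ast$) versus the vacuous $A(k)-A(1)\leq n-1$ (for $\vec{s}$). Hence every $\vec{s}_\ast$-stable $k$-set is $\vec{s}$-stable, so $P(n,k,\vec{s}_\ast)\subseteq P(n,k,\vec{s})$ is an induced subposet, realizing $\Delta(P(n,k,\vec{s}_\ast))$ as a full subcomplex of $\Delta(P(n,k,\vec{s}))$. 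The goal is to show that this inclusion is a homotopy equivalence.

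The first structural step would be to characterize the \emph{bad} elements of $P(n,k,\vec{s})\setminus P(n,k,\vec{s}_\ast)$: these are pairs $(A,B)$ in which at least one side (say $A$) admits some $\vec{s}$-stable $k$-subset but no $\vec{s}_\ast$-stable one. Because the obstruction is the condition $A(k)-A(1)\leq n-2$, this forces every $\vec{s}$-stable $k$-subset of the bad side to satisfy $A(1)=1$ and $A(k)=n$; in particular $\{1,n\}$ lies in the bad side, and removing either endpoint destroys \emph{all} $\vec{s}$-stable $k$-subsets.

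Next I would build an acyclic matching $\mu$ on the face poset of $\Delta(P(n,k,\vec{s}))$ whose critical simplices are exactly the chains contained in $P(n,k,\vec{s}_\ast)$. For each chain $C=\sigma_1<\cdots<\sigma_m$ that contains a bad element, one selects a canonical bad element $\sigma^*=(A^*,B^*)$ in $C$ (for instance, the minimum one), and, using the distinguished position of $\{1,n\}$ in the bad side, produces a partner $\sigma'\in P(n,k,\vec{s})$ in a covering relation with $\sigma^*$. A natural candidate is obtained by enlarging the bad side by a free element from $[n]\setminus(A^*\cup B^*)$ chosen canonically so that the enlarged side acquires an $\vec{s}$-stable $k$-subset avoiding $\{1,n\}$. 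The matching then sets $\mu(C)=C\triangle\{\sigma'\}$, yielding an involution on non-critical chains, after which Theorem \ref{DMT} delivers the homotopy equivalence.

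The main obstacle will be the definition of the canonical swap and the verification of acyclicity. The naive choice---for example, moving $n$ from the bad side to the opposite side---would violate $\vec{s}$-stability, precisely because every $\vec{s}$-stable $k$-subset of a bad side uses $n$. A correct rule must therefore augment rather than truncate the bad side, and must specify the augmenting element via a uniform rule (say, the smallest free position in a specified interval). Acyclicity of the resulting matching should be checked by tracking an auxiliary statistic---for example, a lexicographic invariant of the minimum bad element of each chain---so that every application of $\mu$ strictly reduces this statistic, ruling out nontrivial cycles in the Hasse diagram of the face poset. This bookkeeping is the most delicate part of the argument.
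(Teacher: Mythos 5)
Your general strategy --- realizing $\Delta(P(n,k,\vec{s}_\ast))$ as a subcomplex of $\Delta(P(n,k,\vec{s}))$ and collapsing the difference via Discrete Morse Theory --- is precisely the paper's, and your structural observation that any $\vec{s}$-stable but not $\vec{s}_\ast$-stable $k$-set must have smallest element $1$ and largest element $n$ is both correct and used. Beyond that, however, the proposal is an outline rather than a proof: the matching that would carry the argument is never constructed, and the acyclicity verification, which you correctly flag as the crux, is deferred entirely.

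Two gaps surface when one tries to fill in the construction. First, a covering relation in the face poset of $\Delta(P(n,k,\vec{s}))$ adds or deletes a single pair of a chain $\langle (A_1,B_1)\subset\cdots\subset(A_l,B_l)\rangle$, and the added pair must fit between two consecutive links (or at an end). So defining the matching requires specifying not just which pair to modify but \emph{where} in the chain the modification occurs, and this choice must be made involutively; the paper tracks it with indices $r(\sigma), q(\sigma), e(\sigma), f(\sigma)$ computed from a canonically chosen $\vec{s}$- or $\vec{s}_\ast$-stable $k$-set, and the well-definedness checks occupy most of the work. Second, the canonical swap you envision --- attaching to the opposite side an $\vec{s}_\ast$-stable $k$-set inside $[n]\setminus A_l$ --- is not always available: for some bad chains $A_l$ is so large that $[n]\setminus A_l$ contains no $\vec{s}_\ast$-stable $k$-set at all. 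The paper handles exactly this by peeling off such chains (its classes $H_1$ and $H_3$, where $[n]\setminus A_l$ or $[n]\setminus B_l$ lacks an $\vec{s}_\ast$-stable $k$-set) with preliminary matchings before treating the generic bad chains ($H_5$ and $H_6$, where $A_1$ or $B_1$ lacks one but $A_l, B_l$ leave room in their complements). A single-phase matching without this case split would leave your canonical augmenting set undefined on part of its domain.
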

 
 \begin{proof}
\begin{sloppypar}
 The complex $\Delta(P(n,k,\vec{s}_\ast))$ is a subcomplex of $\Delta(P(n,k,\vec{s}))$. A simplex $\sigma=\langle (A_1, B_1)\subset\cdots\subset (A_l, B_l)\rangle$ of $\Delta(P(n,k,\vec{s}))$ is in $\Delta(P(n,k,\vec{s}))\setminus\Delta(P(n,k,\vec{s}_\ast))$ if $A_1$ or $B_1$ does not contain $\vec{s}_\ast$-stable $k$-set. We will collapse all those simplex in more steps. To describe this simplicial collapsing we will use the Discrete Morse Theory.
\end{sloppypar}
 Let $H_1$ denote the set of simplices $\sigma=\langle (A_1, B_1)\subset\cdots\subset (A_l, B_l)\rangle$ of $\Delta(P(n,k,\vec{s}))$ that $[n]\setminus A_l$ does not contain $\vec{s}_\ast$-stable $k$-set. For a simplex $\sigma$ of $H_1$ let $C(\sigma)$ be the lexicographically smallest $\vec{s}$-stable $k$-set in $[n]\setminus A_l$ and
 	\[
 	r(\sigma):=\max\{\{ j: C(\sigma)\not\subseteq B_j\}\cup\{0\}\},
 	\]
 	\[
 	q(\sigma):=\max\{\{ j: B_j=C(\sigma)\}\cup\{0\}\}.
 	\]
 	Now we define the partial matching $(\mu_1,\Sigma_1)$ on $\Delta(P(n,k,\vec{s}))$: let 
 	\[
 	\begin{array}{l}
 	\Sigma_{1,1}:=\{\sigma\in H_1 \colon r(\sigma)=l(\sigma)\}; \\
 	\Sigma_{1,2}:=\{\sigma\in H_1 \colon 0<r=r(\sigma)< l(\sigma) \mbox{ and } (A_{r+1},B_{r+1})\neq (A_r, B_r\cup C(\sigma))\};\\
 	\Sigma_{1,3}:=\{\sigma\in H_1 \colon r(\sigma)=0 \mbox{ and } q(\sigma)=0\};\\
 	\Sigma_{1,4}:=\{\sigma\in H_1 \colon r(\sigma)=0 \mbox{ and } 0<q=q(\sigma)< l(\sigma) \mbox{ and } A_{q+1}\neq A_q\};
 	\end{array} 
 	\]
 	\[
 	\mbox{and } \Sigma_1:=\Sigma_{1,1}\cup\Sigma_{1,2}\cup\Sigma_{1,3}\cup\Sigma_{1,4},
 	\]
 	where $l(\sigma)$ is the index of the maximal element of the chain $\sigma$.
 	For $\sigma\in\Sigma_1$ we define:\\
 	$\mu_1(\sigma):= $
 	\[
 	\left\{\begin{array}{ll}
 	\langle (A_1,B_1)\subset\cdots\subset (A_l,B_l)\subset (A^\star,B^\star) \rangle, & \mbox{if } \sigma\in\Sigma_{1,1},\\
 	\langle\cdots\subset (A_r,B_r)\subset (A^\star, B^\star) \subset (A_{r+1}, B_{r+1}) \subset\cdots\rangle, & \mbox{if } \sigma\in\Sigma_{1,2},\\
 	\langle (A^\circ,B^\circ) \subset (A_1,B_1)\subset\cdots\subset (A_l,B_l) \rangle, & \mbox{if } \sigma\in\Sigma_{1,3},\\
 	\langle \cdots\subset (A_q,B_q)\subset (A^\circ, B^\circ) \subset (A_{q+1},B_{q+1}) \subset\cdots\rangle, & \mbox{if } \sigma\in\Sigma_{1,4}.\\
 	\end{array} \right.
 	\]
 	where $A^\star=A_r$ and $B^\star=B_r\cup C(\sigma)$ for a chain $\sigma\in \Sigma_{1,1}\cup \Sigma_{1,2}$, and $A^\circ=A_{q+1}$ and $B^\circ=C(\sigma)$ for a chain $\sigma\in \Sigma_{1,3}\cup\Sigma_{1,4}$. 
 	
 	It is easy to see that $\mu_1(\Sigma_1)\subseteq H_1$, namely $\mu_1$ add a new element to the chain $\sigma\in \Sigma_1$ that $A_{l(\sigma)}=A_{l(\mu_1(\sigma))}$. Furthermore, $\mu_1$ is injective and $\Sigma_1\cap\mu_1(\Sigma_1)=\emptyset$: let $\tau\in\mu_1(\Sigma_1)$ and suppose that $\mu_1(\sigma)=\tau$. We have to delete an element from $\tau$ to get $\sigma$. If $0<r(\tau)< l(\tau)$, then $(A_{r+1},B_{r+1})= (A_r, B_r\cup C(\tau))$. So $\tau\notin\Sigma_1$ and $\tau =\mu_1(\tau\setminus\{(A_{r+1},B_{r+1})\})$.
 	If $r(\tau)=0$ and $0<q(\tau)<l(\tau)$, then $A_{q+1}= A_q$. So $\tau\notin\Sigma_1$ and $\tau= \mu_1(\tau\setminus\{(A_{q},B_{q})\})$.
 	If we delete a distinct element from $(A_{r+1},B_{r+1})$ and $(A_{q},B_{q})$, then $\sigma\notin\Sigma_1$ or $\mu_1(\sigma)\neq\tau$.  
 \begin{sloppypar}
 Now we show that this matching is acyclic. By contradiction assume that $\sigma_1, \mu_1(\sigma_1),\dots,\sigma_t, \mu_1(\sigma_t)$ is a cycle ($t>1$). In each up-step, $\sigma_i\prec\mu_1(\sigma_i)$ we add a new point to the chain $\sigma_i$ such that $A_{l(\mu_1(\sigma_i))}=A_{l(\sigma_i)}$ and so $C(\mu_1(\sigma_i))=C(\sigma_i)$. In each down-step, $\mu_1(\sigma_i)\succ\sigma_{i+1}$ we delete the point $(A_r,B_r)$ or $(A_{q+1},B_{q+1})$ or $(A_l,B_l)$ from the chain $\mu_1(\sigma_i)$, otherwise $\sigma_{i+1}\not\in \Sigma_1$. If we delete the point $(A_r, B_r)$ from $\mu_1(\sigma_i)$, then $A_{l(\sigma_{i+1})}=A_{l(\mu_1(\sigma_i))}$ and so $C(\sigma_{i+1})=C(\mu_1(\sigma_i))$. Similarly, if we delete the point $(A_{q+1},B_{q+1})$ from $\mu_1(\sigma_i)$, then $A_{l(\sigma_{i+1})}=A_{l(\mu_1(\sigma_i))}$ and so $C(\sigma_{i+1})=C(\mu_1(\sigma_i))$. Else if we delete the point $(A_l,B_l)$ from $\mu_1(\sigma_i)$, then $A_{l(\sigma_{i+1})}\subset A_{l(\mu_1(\sigma_i))}$ and $C(\sigma_{i+1})$ is lexicographically smaller than $C(\mu_1(\sigma_i))$, otherwise $\sigma_{i+1}\not\in \Sigma_1$. That is $A_{l(\sigma_i)}$ and so $C(\sigma_i)$ doesn't change in the cycle or $A_{l(\sigma_i)}$ descend. If $C(\sigma_i)$ doesn't change then the index $r(\sigma_i)$ decrease or the index $q(\sigma_i)$ increase in the cycle, so the cycle isn't possible. If $A_{l(\sigma_i)}$ descend, then $\mu_1(\sigma_t)\succ\sigma_1$ isn't possible, so this is a contradiction again. Therefore $\mu_1$ is an acyclic partial matching.
\end{sloppypar}
 	
 	Let $H_2$ denote the subset $\{ \sigma\in H_1 \colon q(\sigma)=l(\sigma)\}$ of $H_1$. The critical simplices of $\mu_1$ form the complex $\Delta(P_{\vec{s}}(n,k,2))\setminus (H_1\setminus H_2)$. Using the main theorem of DMT we get that
 	\[
 	\Delta(P(n,k,\vec{s}))\simeq \Delta(P(n,k,\vec{s}))\setminus (H_1\setminus H_2).
 	\]
 	
 	Next, we eliminate the rest of the simplices of $H_1$. For a simplex $\sigma$ of $H_2$ let $D(\sigma)$ be the lexicographically smallest $\vec{s}_\ast$-stable $k$-set in $A_1$ and $E(\sigma)$ be the leftside neighbor of $D(\sigma)$, i.e., if $D(\sigma)=\{D(\sigma)(1),D(\sigma)(2),\dots ,D(\sigma)(k)\}$ then $E(\sigma):=\{D(\sigma)(1)-1,D(\sigma)(2)-1,\dots ,D(\sigma)(k)-1\}$. The condition that $B_1$ is an $\vec{s}$-stable $k$-set assures that $E(\sigma)$ is an $\vec{s}_\ast$-stable $k$-set. Let $e(\sigma):=\max\{\{ j: A_j\cap E(\sigma)=\emptyset \}\cup\{0\}\}$. Now we are ready to define the partial matching $(\mu_2,\Sigma_2)$ on $\Delta(P(n,k,\vec{s}))\setminus (H_1\setminus H_2)$: let 
 	\[
 	\Sigma_2:=\{\sigma\in H_2 \colon e(\sigma)=0, \mbox{ or } A_{e+1}\neq A_e\cup E(\sigma)\}
 	\]
 	and\\
 	$\mu_2(\sigma):= $
 	\[
 	\left\{\begin{array}{ll}
 	\langle (A^\diamond,B^\diamond) \subset (A_1,B_1)\subset\cdots\subset (A_l,B_l) \rangle, & \mbox{if } e(\sigma)=0,\\
 	\langle \cdots\subset (A_e,B_e)\subset (A^\diamond , B^\diamond) \subset (A_{e+1},B_{e+1}) \subset\cdots\rangle, & \mbox{otherwise.}\\
 	\end{array} \right.
 	\]
 	where $A^\diamond=A_{e+1}\setminus E(\sigma)$ and $B^\diamond=B_{e+1}$ for a chain $\sigma\in \Sigma_2$. 
 	
 	It is easy to see that $\Sigma_2\cap\mu_2(\Sigma_2)=\emptyset$, $\Sigma_2\cup\mu_2(\Sigma_2)=H_2$ and $\mu_2$ is injective.
\begin{sloppypar}
 We show that this matching is acyclic. By contradiction assume that $\sigma_1, \mu_2(\sigma_1),\dots,\sigma_t, \mu_2(\sigma_t)$ is a cycle ($t>1$). In each upstep, $\sigma_i\prec\mu_2(\sigma_i)$ we add a new point to the chain $\sigma_i$ such that $D(\mu_2(\sigma_i))=D(\sigma_i)$. In each downstep, $\mu_2(\sigma_i)\succ\sigma_{i+1}$ we delete the point $(A_e,B_e)$ or $(A_1,B_1)$ from the chain $\mu_2(\sigma_i)$, otherwise $\sigma_{i+1}\not\in \Sigma_2$. If we delete the point $(A_e, B_e)$ from $\mu_2(\sigma_i)$, then  $D(\sigma_{i+1})=D(\mu_1(\sigma_i))$. If we delete the point $(A_1,B_1)$ from $\mu_2(\sigma_i)$, then $D(\sigma_{i+1})$ is lexicographically smaller than $D(\mu_1(\sigma_i))$, otherwise $\sigma_{i+1}\not\in \Sigma_2$. That is  $D(\sigma_i)$ doesn't change in the cycle or $A_1$ ascend. If $D(\sigma_i)$ doesn't change then the index $e(\sigma_i)$ decrease in the cycle, so the cycle isn't possible. If $A_1$ ascend, then $\mu_1(\sigma_t)\succ\sigma_1$ isn't possible, so this is a contradiction again.
\end{sloppypar}
 	
 	Therefore $\mu_2$ is an acyclic partial matching, and the critical simplices form the complex $\Delta(P_{\vec{s}}(n,k,2))\setminus H_1$. Using the main theorem of DMT we get that
 	\[
 	\Delta(P(n,k,\vec{s}))\simeq \Delta(P(n,k,\vec{s}))\setminus H_1.
 	\]
 	
 	Let $H_3$ be the set of simplices $\sigma$ of $\Delta(P(n,k,\vec{s}))$ that $[n]\setminus B_l$ does not contain $\vec{s}_\ast$-stable $k$-set. We note that $H_1\cap H_3= \emptyset$, so $H_3$ is the set of the simplex of $\Delta(P_{\vec{s}}(n,k,2))\setminus H_1$. Similarly as above, we can collapse all simplex $\sigma\in H_3$ from $\Delta(P(n,k,\vec{s}))\setminus H_1$ by two partial matchings. That is 
 	
 	\[
 	\Delta(P(n,k,\vec{s}))\simeq (\Delta(P(n,k,\vec{s})))\setminus (H_1\cup H_3)).
 	\]

 	Next, let $H_5$ be the set of the simplex $\sigma$ of $\Delta(P(n,k,\vec{s}))\setminus (H_1\cup H_3)$ that $B_1$ does not contain $\vec{s}_\ast$-stable $k$-set. For a simplex $\sigma$ of $H_5$ let $F(\sigma)$ be the lexicographically smallest $\vec{s}_\ast$-stable $k$-set in $[n]\setminus A_l$ and $f(\sigma):=\max\{j: f(\sigma)\not\subseteq B_j\}$. This set exists, because we collapsed all $\sigma$ such that $[n]\setminus A_l$ does not contain $\vec{s}_\ast$-stable $k$-set, and $F(\sigma)\not\subseteq B_1$ so $1\leq f$. The simplices of $H_5$ are collapsible by the following partial matching: 
 	\[
 	\Sigma_5:=\{\sigma\in H_5 \colon f(\sigma)=l(\sigma)\}\cup
 	\]
 	\[
 	\{\sigma\in H_5 \colon f=f(\sigma)< l(\sigma) \mbox{ and } (A_{f+1},B_{f+1})\neq (A_f,B_f\cup D(\sigma))\}
 	\]
 	and $\mu_5(\sigma)= $
 	\[
 	\left\{\begin{array}{l}
 	\langle (A_1,B_1)\subset\cdots\subset (A_l,B_l)\subset (A^\diamond,B^\diamond) \rangle,\mbox{ if } f(\sigma)=l(\sigma),\\
 	\langle \cdots\subset (A_f,B_f)\subset (A^\diamond,B^\diamond) \subset (A_{f+1},B_{f+1}) \subset\cdots\rangle,\ \mbox{otherwise},\\
 	\end{array} \right.
 	\]
 	where $A^\diamond=A_f$ and $B^\diamond=B_f\cup \{F(\sigma)\}$ for a chain $\sigma\in \Sigma_5$. 
 	
 	It can be checked that (as above)
 	\begin{itemize}
 		\item $\Sigma_5 \cap\mu_5(\Sigma_5)=\emptyset$;
 		\item $\Sigma_5 \cup\mu_5(\Sigma_5)=H_5$;
 		\item $\mu_5$ is injective;  
 		\item $\mu_5$ is an acyclic matching;
 		\item the critical simplices form the complex $$\Delta(P(n,k,\vec{s}))\setminus (H_1\cup H_3\cup H_5).$$
 	\end{itemize}
 	
 	Using the main theorem of DMT we get that
 	\[
 	\Delta(P(n,k,\vec{s}))\simeq \Delta(P(n,k,\vec{s}))\setminus (H_1\cup H_2)\simeq \Delta(P(n,k,\vec{s}))\setminus (H_1\cup H_3\cup H_5).
 	\]
 	
 	Let $H_6$ be the set of the simplex $\sigma$ of $\Delta(P(n,k,\vec{s}))\setminus (H_1\cup H_3)$ that $A_1$ does not contain $\vec{s}_\ast$-stable $k$-set. Similarly, we have $H_5\cap H_6=\emptyset$, and we can collapse all simplex $\sigma\in H_6$ from $\Delta(P(n,k,\vec{s}))\setminus (H_1\cup H_3\cup H_5)$ by a partial matching. That is 
 	\[
 	\Delta(P(n,k,\vec{s}))\simeq \Delta(P(n,k,\vec{s}))\setminus (H_1\cup H_3\cup H_5)\simeq
 	\]
 	\[
 	\Delta(P(n,k,\vec{s}))\setminus (H_1\cup H_3\cup H_5\cup H_6)= \Delta(P(n,k,\vec{s}_\ast))
 	\]  
 	
 \end{proof}
Now, Theorems \ref{poset1}, \ref{poset2}, and Lemma \ref{neighborhoodcomplex} imply Theorem \ref{main}

\section{The chromatic number of $KG(n, k)_{\vec{s}-stab}$}
The proof of Theorem \ref{chromaticnumber} is a simple corollary of Theorem \ref {main} and the following observation. There is a proper coloring of $KG(n, k)_{\vec{s}-stab}$ with $n-\left(\sum_{i=1}^{k-1}s_i\right)$-number of colors. Indeed, it is not hard to check that the following map
\begin{align*}
c : {\binom{[n]}{k}}_{\vec{s}-stab} &\longrightarrow [n-(\sum_{i=1}^{k-1}s_i)]\\
A &\longmapsto  \min\{\min A, n-(\sum_{i=1}^{k-1}s_i)\},
\end{align*}
defines a proper coloring of $KG(n, k)_{\vec{s}-stab}$. Thus, $\chi\left(KG(n, k)_{\vec{s}-stab}\right)\leq n-\left(\sum_{i=1}^{k-1}s_i\right)$. On the other hand, combining Lov\'{a}sz lower bound with Theorem \ref{main}, and the fact that any $d$-sphere is $(d-1)$-connected imply that
$\chi\left(KG(n, k)_{\vec{s}-stab}\right)\geq n-\left(\sum_{i=1}^{k-1}s_i\right)$ for the mentioned parameters in Theorem \ref{main}. Hence, 
$$\chi\left(KG(n, k)_{\vec{s}-stab}\right)= n-(\sum_{i=1}^{k-1}s_i),$$
where $k\geq 2$, $n\geq\sum_{i=1}^{k-1}s_i+2$, $s_i\geq2$ for $i\neq k$ and $s_k\in\{1,2\}$. As a simple corollary of Theorem \ref{chromaticnumber}, we determine the chromatic number of $3$-stable Kneser graphs with at most one error. 
\begin{corollary}
	Let $n,k$ be positive integers with $n\geq 3k$. Then
	$$\chi\left({KG(n,k)}_{3-stab}\right)\geq n-3(k-1)-1.$$
\end{corollary}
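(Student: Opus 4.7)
The plan is to deduce the lower bound from Theorem~\ref{chromaticnumber} by exhibiting an $\vec{s}$-stable Kneser graph (covered by that theorem) as an induced subgraph of $KG(n,k)_{3-stab}$ and invoking the monotonicity of the chromatic number under subgraph inclusion. The natural candidate stability vector is $\vec{s}=(3,\ldots,3,2)$, obtained from the ``all threes'' vector by relaxing only the wrap-around coordinate from $3$ to $2$; it satisfies the hypotheses $s_i\geq 2$ for $i<k$ and $s_k\in\{1,2\}$ required by Theorem~\ref{chromaticnumber}.

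The main step is the observation that $KG(n-1,k)_{\vec{s}-stab}$ sits inside $KG(n,k)_{3-stab}$ as an induced subgraph. A vertex of the former is a $k$-set $A\subseteq [n-1]$ with $A(j+1)-A(j)\geq 3$ for $j<k$ and $A(k)-A(1)\leq (n-1)-2=n-3$; but these are exactly the conditions for $A$ to be $3$-stable as a subset of $[n]$, since the cyclic condition $3\leq |i-j|\leq n-3$ for distinct $i,j\in A$ unpacks to the same pair of inequalities on consecutive differences and on $A(k)-A(1)$. Because adjacency in both graphs means disjointness of the underlying sets, the inclusion of vertex sets is automatically both edge-preserving and edge-reflecting, so we indeed obtain an induced subgraph.

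Finally, I would apply Theorem~\ref{chromaticnumber} to $KG(n-1,k)_{\vec{s}-stab}$; the required hypothesis $n-1\geq\sum_{i=1}^{k-1}s_i+2=3k-1$ is equivalent to the assumed $n\geq 3k$, and the theorem then gives $\chi(KG(n-1,k)_{\vec{s}-stab})=(n-1)-3(k-1)=n-3(k-1)-1$. Monotonicity of the chromatic number immediately yields $\chi(KG(n,k)_{3-stab})\geq n-3(k-1)-1$, as desired. I do not expect any genuine obstacle here; the only delicate point is the bookkeeping translation between the cyclic $3$-stability condition on $[n]$ and the linear $\vec{s}$-stability condition on $[n-1]$, plus checking that the arithmetic bound on $n$ lines up with the hypothesis of Theorem~\ref{chromaticnumber}.
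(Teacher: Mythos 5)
Your proposal matches the paper's argument exactly: both embed $KG(n-1,k)_{(3,\ldots,3,2)}$ into $KG(n,k)_{3\text{-stab}}$ as a subgraph, apply Theorem~\ref{chromaticnumber} to compute the chromatic number of the smaller graph, and conclude by monotonicity. Your verification of the hypothesis $n-1\geq 3(k-1)+2$ and the translation between cyclic $3$-stability on $[n]$ and $(3,\ldots,3,2)$-stability on $[n-1]$ is a correct and slightly more explicit spelling-out of what the paper leaves implicit.
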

\begin{proof}
	This is an easy task as the graph ${KG(n-1,k)}_{(3,\ldots,3,2)}$ is a sub-graph of ${KG(n,k)}_{3-\textit{stab}}$ which implies
	$$n-3(k-1)-1=\chi\left({KG(n-1,k)}_{(3,\ldots,3,2)}\right)\leq\chi\left({KG(n,k)}_{3-\textit{stab}}\right).$$
\end{proof}

One may speculate that one may gives a sharper lower bound on the chromatic number of $3$-stable Kneser graphs by determining the homotopy type of $\mathcal{N}\left(KG(n,k)_{3-stab}\right)$. Unfortunately, this is not the case. Oszt\'{e}nyi has seen in \cite{osz} that $\mathcal{N}\left(KG(n,k)_{3-stab}\right)$ is not $(n-3k)$-connected in general. Finally, regrading Theorem \ref{chromaticnumber} and Conjecture \ref{conj}, the following question might be interesting.
\begin{question}
What is the chromatic number of ${KG(n, k)}_{\vec{s}-\textit{stab}}$ for an arbitrary integer vector $\vec{s}=(s_1,\ldots, s_k)$ and $n\geq\sum_{i=1}^{k}s_i$?
\end{question}

\section*{Acknowledgments}
The research of the second author is supported by \textbf{ EFOP-3.6.1-16-2016-00006 "The development and enhancement of the research potential at John von Neumann University"} project. The Project is supported by the Hungarian Government and co-financed by the European Social Fund.

\end{document}